 \newtheorem{prop}{Proposition}[section]
 \newtheorem{lem}[prop]{Lemma}
 \newtheorem{thm}[prop]{Theorem}
 \newcommand{\B}{\mathbb B}
 \newcommand{\R}{\mathbb R}
 \newcommand{\C}{\mathbb C}
 \newcommand{\e}{\varepsilon}
 \newcommand{\f}{\varphi}
 \newcommand{\p}{\psi}
 \numberwithin{equation}{section}
 \newtheorem*{ackn}{Acknowledgement}
\begin{document}

  \title[Viscosity solutions to complex Monge-Amp\`ere equations]{Erratum to Viscosity solutions to complex Monge-Amp\`ere equations}

\setcounter{tocdepth}{1}

  \author{ Philippe Eyssidieux, Vincent Guedj, Ahmed Zeriahi} 

\address{Universit\'e Joseph Fourier et Institut Universitaire de France}

\email{Philippe.Eyssidieux@ujf-grenoble.fr }

\address{Institut de Math\'ematiques de Toulouse et Institut Universitaire de France}

\email{vincent.guedj@math.univ-toulouse.fr}

\address{Institut de Math\'ematiques de Toulouse, France}

\email{zeriahi@math.univ-toulouse.fr}

 \date{\today}

\begin{abstract}  
The proof of the comparison principle
in \cite{EGZ11} is not complete. We provide  here an alternative proof, valid
 in the ample locus of any big cohomology class,
 and discuss the  resulting modifications.
\end{abstract}

 \maketitle

 \section*{Introduction}

 Jeff Streets has informed the authors that the proof of \cite[Theorem 2.14]{EGZ11} is not correct as it stands: the localization procedure
 that we use does not provide enough information to control the Hessian of the penalization function $\f_3$ along the diagonal.
 
 We provide here a different approach which yields an alternative proof of the global comparison principle in the ample locus.
 This is sufficient for constructing unique viscosity solutions to degenerate complex Monge-Amp\`ere equations.
 The latter are continuous in the ample locus, but the continuity at the boundary remains an   open question.

 \begin{ackn} 
 We would like to thank Jeff Streets for pointing out the problem in the original proof
 and interesting exchanges.
\end{ackn}

 \section{Comparison principle in the ample locus} 
 
 Wer first recall the context.
   Let $X$ be a compact K\"ahler manifold of dimension $n$.  
We consider the complex Monge-Amp\`ere equation on $X$,
\begin{equation} \label{eq:CMA}
 e^{\f } \mu - (\omega +dd^c \f)^n = 0,
 \end{equation}
where 
  \begin{itemize}
  \item $\omega $ is a continuous  closed    $(1,1)-$form on $X$ with smooth local potentials such that its cohomology class $\eta := \{\omega\}$ is big, 
\item $\mu \geq 0$ is a continuous  volume form on $X$, 
  \item $\f : X  \longrightarrow \R$ is the unknown function,
   \end{itemize}

\begin{thm} \label{thm:big}
 Let $\f$ (resp. $\p$) be a  viscosity subsolution (resp. supersolution) to 
 (\ref{eq:CMA}) in $X $.
 Then  
$$
\f  \leq \p  \, \, \, \mathrm{in} \, \, \,  \mathrm{Amp} \{\omega \}.
$$
In particular if $\p$ is continuous  on $X$, then $\f \leq \psi$ on $X$.
\end{thm}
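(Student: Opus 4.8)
The plan is to prove $\f \le \p$ on $\Om := \mrm{Amp}\{\omega\}$; the final assertion then follows by a removable-singularity argument, since $\f$ (being a viscosity subsolution) is $\omega$-\psh while $X \sm \Om$ is an analytic set. Indeed, in a coordinate chart where $\omega = dd^c g$ with $g$ smooth, $\f + g$ is \psh and $\le \p + g$ off a Lebesgue-null set, hence everywhere by the sub-mean value inequality. The crucial input, used globally, is bigness of $\{\omega\}$: there is a quasi-\psh function $\rho$ on $X$ with analytic singularities along $X \sm \Om$, smooth on $\Om$, normalized by $\rho \le 0$, and such that $\omega + dd^c\rho \ge \de\,\omega_X$ on $\Om$ for some K\"ahler form $\omega_X$ and some $\de > 0$ --- equivalently, $\omega + dd^c\rho$ is a K\"ahler current whose non-K\"ahler locus is exactly $X \sm \Om$.

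First I would introduce the global perturbation $\f_\e := (1-\e)\f + \e\,\rho$, $\e \in (0,1)$. If $q \in C^2$ touches $\f_\e$ from above at a point $x_0 \in \Om$, then $(q - \e\rho)/(1-\e)$ is $C^2$ near $x_0$ (as $\rho$ is smooth there) and touches $\f$ from above; the subsolution property of $\f$, combined with the Minkowski determinant inequality $(\det(A+B))^{1/n} \ge (\det A)^{1/n} + (\det B)^{1/n}$ for nonnegative Hermitian $A,B$, then gives $\omega + dd^c q(x_0) \ge 0$ and $(\omega + dd^c q(x_0))^n \ge (1-\e)^n e^{\f(x_0)}\mu + (\e\de)^n\omega_X^n \ge (\e\de)^n\omega_X^n$. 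Thus $\f_\e$ is a viscosity subsolution on $\Om$ of a \MA equation with strictly positive right-hand side; moreover $\f_\e$ is $\omega$-\psh, $\f_\e \le (1-\e)\f$ (since $\rho \le 0$), and $\f_\e \to -\infty$ near $X\sm\Om$. As $\p$ is bounded below, $\f_\e - \p$ is upper semicontinuous on $\Om$ and tends to $-\infty$ approaching $X\sm\Om$; hence either $\f_\e \le \p$ on $\Om$, or $m_\e := \sup_\Om(\f_\e - \p)$ is positive and attained at some $x_\e$ in a compact subset of $\Om$. Assuming for contradiction that $M := \sup_\Om(\f-\p) > 0$, one checks (fixing $x_\ast \in \Om$ with $(\f-\p)(x_\ast) > M/2$ and letting $\e \to 0$) that $m_\e$ is bounded below by a positive constant for all small $\e$.

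Next I would run the classical local comparison around the interior point $x_\e$. Fix a small coordinate ball $B \Sub \Om$ centred at $x_\e$ with $\omega = dd^c g$, $g$ smooth, and put $u := \f_\e + g$, $v := \p + g$: then $u$ is a \psh viscosity subsolution and $v$ a viscosity supersolution of the local \MA equation $\det(u_{i\bar j}) = e^{u-g}f$ (with $f$ the local density of $\mu$), the right-hand side for $u$ being moreover bounded below by a strictly positive continuous function, and $u-v$ has an interior maximum over $B$ at $x_\e$ (made strict by subtracting $|x-x_\e|^4$, which does not affect the limits below). Applying the Jensen--Ishii lemma to $u(x) - v(y) - \tfrac{\beta}{2}|x-y|^2$ produces, for $\beta$ large, points $x_\beta, y_\beta \to x_\e$ and $(p_\beta, P_\beta) \in \overline{J}^{2,+}u(x_\beta)$, $(q_\beta, Q_\beta) \in \overline{J}^{2,-}v(y_\beta)$ whose complex $(1,1)$-parts satisfy $0 \le P_\beta \le Q_\beta$ --- the left inequality because $u$ is \psh (so any $C^2$ jet dominating $u$ has nonnegative complex Hessian), the right one because $P_\beta \le Q_\beta$ as real symmetric matrices. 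Since $Q_\beta \ge 0$, the supersolution property of $v$ legitimately applies and gives $\det Q_\beta \le e^{\p(y_\beta)}f(y_\beta)$, whereas the subsolution property of $u$ gives $\det P_\beta \ge (1-\e)^n e^{\f(x_\beta)}f(x_\beta) + (\e\de)^n h(x_\beta)$, with $h > 0$ the local density of $\omega_X^n$; and $\det P_\beta \le \det Q_\beta$ by monotonicity. Letting $\beta \to \infty$, with $x_\beta, y_\beta \to x_\e$, $u(x_\beta) \to u(x_\e)$, $v(y_\beta) \to v(x_\e)$ (standard doubling lemma) and $f, g, h$ continuous, one obtains
\[
 (1-\e)^n e^{\f(x_\e)} f(x_\e) + (\e\de)^n h(x_\e) \;\le\; e^{\p(x_\e)} f(x_\e).
\]
But $(1-\e)\f(x_\e) \ge \f_\e(x_\e) = \p(x_\e) + m_\e$, and $m_\e$ together with $-\p(x_\e)$ are bounded, so $(1-\e)^n e^{\f(x_\e)} \ge e^{\p(x_\e)}$ for all small $\e$; then the left-hand side of the displayed inequality strictly exceeds the right-hand side, the term $(\e\de)^n h(x_\e) > 0$ leaving no slack --- a contradiction. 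Hence $\f_\e \le \p$ on $\Om$ for all small $\e$, and letting $\e \to 0$ (so $\f_\e \to \f$ pointwise on $\Om$) yields $\f \le \p$ on $\Om$.

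The hard part is the second-order information in the doubling step --- precisely where the original proof broke down, through loss of control on the Hessian of a penalization term arising from a local construction. The present plan sidesteps this by performing the $\rho$-perturbation \emph{globally}, which forces the competing maximum into the interior of the ample locus and reduces the analytic core to a single clean application of Jensen--Ishii around that good point, with no auxiliary penalization to estimate. The remaining care is routine: verifying that the Minkowski perturbation produces a genuine viscosity subsolution with strictly positive right-hand side; the passage between real symmetric Hessians and their complex $(1,1)$-parts, including the nonnegativity of the jets of the \psh function $u$; and the semicontinuity bookkeeping in the successive limits $\beta \to \infty$ and $\e \to 0$.
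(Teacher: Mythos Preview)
Your proposal is correct and follows essentially the same strategy as the paper: perturb the subsolution globally by $\e\rho$ (with $\rho$ a quasi-psh function with analytic singularities along $X\setminus\Om$) so as to force the maximum of the difference into the ample locus and make the subsolution strictly $\omega$-psh there, then run the Jensen--Ishii doubling argument locally around that interior maximum point and let $\e\to 0$. The only organizational differences are that the paper argues directly rather than by contradiction (obtaining $\tilde\f(x_0)\le\p(x_0)-n\log(1-\e)$ at the maximum and hence everywhere, which is slightly cleaner than your final step), isolates the local Jensen--Ishii argument into a separate lemma with its own auxiliary parameter $\alpha$, and normalizes $\rho\le\f$ rather than $\rho\le 0$; your claim ``$m_\e$ together with $-\p(x_\e)$ are bounded, so $(1-\e)^n e^{\f(x_\e)}\ge e^{\p(x_\e)}$'' is correct but deserves one more line of justification (bounding $\f(x_\e)$ below via $(1-\e)\f(x_\e)\ge\f_\e(x_\e)\ge\inf\p+c$).
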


Recall that the ample locus $\mathrm{Amp} \{ \omega \}$ of the cohomology class of $\omega$ is the  Zariski open subset
of points $x \in X$ such that there exists a positive closed current cohomologous to $\omega$
which is a K\"ahler form near $x$. In particular $\mathrm{Amp}  \{ \omega \}=X$ when $\omega$ is K\"ahler.

\subsection{A refined local comparison principle}

We first establish a   useful  lemma for the local equation
\begin{equation}  \label{eq:LCMA}
e^{u } \mu - (dd^c u)^n = 0.
\end{equation}

\begin{lem} \label{Fund} 
Let $\mu (z) \geq  0, \nu (z)\geq 0$ be continuous volume forms on some domain $D \Subset \C^n$. 
 Let $u$ be a  subsolution to   (\ref{eq:LCMA}) associated to $\mu$  and let $v$ be a bounded  supersolution to 
   (\ref{eq:LCMA}) associated to $\nu$ in $ D$.
 Assume that 
 
 \smallskip
 
 $(i)$ the function   $u - v$ achieves a local  maximum at some $x_0 \in D$;
 
 \smallskip
  
 $ (ii)$ $\exists c > 0$ s.t. $z \longmapsto u (z)- 2 c \vert z\vert^2$ is plurisubharmonic near $x_0$.
 
\noindent  Then   $\nu(x_0)>0$ and 
 \begin{equation} 
 e^{ u (x_0)} \mu (x_0)  \leq e^{ v (x_0)} \nu (x_0).
\end{equation}
\end{lem}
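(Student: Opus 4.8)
The plan is to prove the lemma by a viscosity maximum principle, after regularising $u$ and $v$ by sup-- and inf--convolution. The decisive point is that the sup--convolution of a \psh function is again \psh, so hypothesis $(ii)$ survives the regularisation and supplies exactly the strict positivity of the complex Hessian of the regularised subsolution needed to run the comparison. Throughout, $\beta := dd^c|z|^2$ denotes the standard positive $(1,1)$--form. By localising we may assume $(ii)$ holds on a ball $B\Subset D$ centred at $x_0$ and that $u-v\le (u-v)(x_0)$ on $\overline B$; a routine perturbation $u\rightsquigarrow u-\sigma|z-x_0|^2$ (which preserves $(ii)$ with constant $c-\sigma$ and turns $u$ into a subsolution of $e^{w}\widetilde\mu=(dd^c w+\sigma\beta)^n$ with $\widetilde\mu$ continuous and $\widetilde\mu(x_0)=\mu(x_0)$) lets us assume in addition that $x_0$ is the \emph{strict} maximum of $u-v$ on $\overline B$; we will let $\sigma\downarrow0$ only at the very end, which is harmless because the Hessians involved stay in a fixed compact set.

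For $\e>0$ small, set $u^\e:=\sup_y\bigl(u(y)-\tfrac1{2\e}|\cdot-y|^2\bigr)$ and $v_\e:=\inf_y\bigl(v(y)+\tfrac1{2\e}|\cdot-y|^2\bigr)$ on $\overline B$. Then $u^\e$ is locally Lipschitz, semi--convex, and $u^\e\downarrow u$ as $\e\downarrow0$; $v_\e$ is locally Lipschitz, semi--concave, and $v_\e\uparrow v$ (boundedness of $v$ is used here). By the standard behaviour of sup/inf--convolutions of viscosity solutions, $u^\e$ is a subsolution of $(\ref{eq:LCMA})$ with $\mu$ replaced by $\mu_\e(x):=\inf_{B(x,\rho_\e)}\mu$, and $v_\e$ is a supersolution with $\nu$ replaced by $\nu^\e(x):=\sup_{B(x,\rho_\e)}\nu$, where $\rho_\e\to0$. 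Finally, writing $u=(u-2c|z|^2)+2c|z|^2$ and using that the sup--convolution of the \psh function $u-2c|z|^2$ is \psh, one checks directly that $u^\e-2c|z|^2$ is \psh near $x_0$ for $\e$ small, i.e. $dd^c u^\e\ge 2c\,\beta$ there.

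The function $u^\e-v_\e$ is continuous, and a standard semicontinuity argument shows it attains its maximum over $\overline B$ at a point $x_\e$ with $\max_{\overline B}(u^\e-v_\e)\downarrow(u-v)(x_0)$ and $x_\e\to x_0$ (using strictness). Since $u^\e-v_\e$ is semi--convex, Jensen's lemma gives a positive--measure set of small vectors $q$ for which $u^\e-v_\e-\langle q,\cdot\rangle$ has a local maximum near $x_\e$; as $u^\e$ and $v_\e$ are twice differentiable almost everywhere (Alexandrov), we may pick $q$, hence a point $\widehat x=\widehat x_\e\to x_0$, at which both $u^\e$ and $v_\e$ are twice differentiable. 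Comparing second order Taylor expansions at $\widehat x$ yields $D^2u^\e(\widehat x)\le D^2v_\e(\widehat x)$, hence $dd^cu^\e(\widehat x)\le dd^cv_\e(\widehat x)$ as Hermitian matrices, and exhibits $\bigl(Du^\e(\widehat x),D^2u^\e(\widehat x)\bigr)\in J^{2,+}u^\e(\widehat x)$, $\bigl(Dv_\e(\widehat x),D^2v_\e(\widehat x)\bigr)\in J^{2,-}v_\e(\widehat x)$, so the sub-- and supersolution inequalities apply at $\widehat x$. Now $dd^cu^\e(\widehat x)\ge 2c\,\beta>0$, hence $dd^cv_\e(\widehat x)\ge 2c\,\beta>0$ as well, so the supersolution property is in its non--degenerate branch; combined with the subsolution inequality and the monotonicity of $\det$ on positive Hermitian matrices this gives
$$ e^{u^\e(\widehat x)}\mu_\e(\widehat x)\;\le\;\bigl(dd^cu^\e(\widehat x)\bigr)^n\;\le\;\bigl(dd^cv_\e(\widehat x)\bigr)^n\;\le\;e^{v_\e(\widehat x)}\nu^\e(\widehat x),\qquad \nu^\e(\widehat x)\;\ge\;(2c)^n\,\beta^n\,e^{-v_\e(\widehat x)}. $$
Letting $\e\to0$ (using $\widehat x_\e\to x_0$, the continuity of $\mu,\nu$, and $u^\e(\widehat x_\e)\to u(x_0)$, $v_\e(\widehat x_\e)\to v(x_0)$, which follow from semicontinuity together with $u^\e\downarrow u$, $v_\e\uparrow v$), and then $\sigma\downarrow0$, gives $\nu(x_0)>0$ and $e^{u(x_0)}\mu(x_0)\le e^{v(x_0)}\nu(x_0)$.

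The main obstacle is the construction of the point $\widehat x$: one needs a single point near $x_0$ at which $u^\e$ and $v_\e$ are \emph{simultaneously} twice differentiable, at which $D^2u^\e(\widehat x)\le D^2v_\e(\widehat x)$, and at which the viscosity inequalities may legitimately be invoked — this is precisely the control on the Hessian that the original localisation argument failed to provide, and here it is furnished by Jensen's lemma and Alexandrov's theorem, which apply because $u^\e$ and $v_\e$ are semi--convex, resp. semi--concave. The second essential ingredient is the preservation of $(ii)$ under sup--convolution: the bound $dd^cu^\e(\widehat x)\ge 2c\,\beta$ is what rules out the degenerate branch of the supersolution condition and forces $\nu(x_0)>0$.
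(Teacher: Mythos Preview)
Your argument is essentially correct and gives a valid proof of the lemma, but it follows a different route from the paper's. The paper keeps $u$ and $v$ as they are, doubles the variables, and invokes the packaged Jensen--Ishii maximum principle on $\bar\B\times\bar\B$: hypothesis $(ii)$ is used to replace $u$ by $u_\alpha=u-c\alpha|z|^2$, which both forces a \emph{strict} maximum at $0$ and produces an extra term $\alpha^nc^n\,dV_{\mathrm{eucl}}$ in the subsolution equation; this surviving positive term is what forces $\nu(0)>0$ at the end. You instead regularise $u,v$ by sup/inf--convolution, then use Jensen's lemma plus Alexandrov's theorem to locate a single point $\widehat x$ where both are twice differentiable and $D^2u^\e\le D^2v_\e$; your exploitation of $(ii)$ is the observation that sup--convolution preserves the bound $dd^cu\ge 2c\beta$ (correct: writing $y=x+z$, each term $w(x+z)+4c\,\mathrm{Re}\langle x,z\rangle+\mathrm{const}$ is psh in $x$ when $w:=u-2c|z|^2$ is psh, so the sup is psh), which then forces $dd^cv_\e(\widehat x)\ge 2c\beta$ and hence $\nu(x_0)>0$. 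The two methods are close cousins --- Jensen--Ishii is, after all, proved by exactly this sup/inf--convolution plus Jensen's lemma --- but your version avoids the doubling and the matrix inequality, at the cost of handling the Alexandrov point by hand.

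One point to tighten: your treatment of the $\sigma$--perturbation. After $u\rightsquigarrow u-\sigma|z-x_0|^2$ you end up comparing $(dd^cu^\e(\widehat x)+\sigma\beta)^n$ with $(dd^cv_\e(\widehat x))^n$, and the claim that this is ``harmless because the Hessians involved stay in a fixed compact set'' is not right as stated: the only a priori upper bound on $dd^cv_\e(\widehat x)$ is $\e^{-1}\beta$, which blows up as $\e\to0$. What actually saves you is again hypothesis $(ii)$: since $dd^cv_\e(\widehat x)\ge dd^cu^\e(\widehat x)\ge 2(c-\sigma/2)\beta$, one has $(dd^cv_\e+\sigma\beta)^n\le\bigl(1+\tfrac{\sigma}{2c-\sigma}\bigr)^n(dd^cv_\e)^n$, and this multiplicative factor tends to $1$ as $\sigma\to0$ \emph{uniformly in $\e$}. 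Alternatively, absorb the perturbation into $\mu$ at the outset: from $dd^cu\ge 2c\beta$ one gets $dd^c(u-\sigma|z-x_0|^2)\ge(1-\sigma/2c)\,dd^cu$, so the perturbed $u$ is a subsolution of $(dd^cw)^n=e^w(1-\sigma/2c)^n\mu$, and the equation for $v$ is untouched; this is in the spirit of the paper's $u_\alpha$ trick and keeps the argument clean.
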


 \begin{proof}
The idea of the proof  is to apply Jensen-Ishii's comparison principle which is obtained from the classical maximum principle 
by regularizing $u$ and $v$  (see \cite{CIL92,EGZ11}).

We can assume  that   $u - v$ achieves a local maximum  at $0$,
 $$
 M := \sup_{x \in  \bar \B} (u (x) - v  (x)) = u (0) - v  (0),
$$
where $\B$ is the unit ball in $\C^n$.
The hypothesis (ii) insures that $u (z) - 2 c \vert z\vert^2$ is  plurisubharmonic in a neighborhood of $\bar \B$. 
Thus  for any fixed $\alpha \in ]0,1[$, the function 
$$
  u_\alpha(z) := u (z) -  c \alpha \vert z\vert^2
$$
 is  strictly psh in $\B$  and $u_\alpha - v$ achieves a strict maximum at $0$ in $ \bar \B$.
 
 Observe that
 $
 u_\alpha - (1-\alpha) u  - \alpha c \vert z \vert^2 =    \alpha (u -  2 c \vert z \vert^2 )
 $
 is psh in $\B$ hence  
 $$
 dd^c u_\alpha \geq (1-\alpha) dd^c u + \alpha \, c \,  dd^c \vert z \vert^2.
 $$ 
It follows that, setting $dV_{eucl} :=  (dd^c \vert z\vert^2)^n$,
\begin{eqnarray*}
(dd^c u_\alpha)^n & \geq & (1-\alpha)^n (dd^c u)^n +\alpha^n c^n dV_{eucl} \\
& \geq & e^{ u_\alpha(z)  +  n \log (1-\alpha) } \mu + \alpha^n c^n dV_{eucl}
\end{eqnarray*}
in the viscosity sense, noticing that $u \geq u_\alpha$.

Since  $u_\alpha (0) = u (0)$ we replace in the sequel $u$ by $u_\alpha$.
We have thus reduced the situation to the case where $u (z)-  c \vert z\vert^2$ is  psh in $\B$, 
$u-v$ achieves a strict maximum in $ \bar \B$ at $ 0$ and $u$ is a subsolution of the equation  
$$
  e^{ u (z) +    n \log (1-\alpha)} \mu +  \alpha^n  c^n dV_{eucl}  - (dd^c u)^n = 0. 
$$

We want to apply  Jensen-Ishii's maximum principle to $u$ and $v$ by using the penalty method as in \cite{CIL92}.  
Fixing  $\e>0$, we want to maximize on $ \bar \B \times \bar \B$
the upper semi-continuous  function
 $$
  w_\e (x,y) :=   u (x) - v  (y)  - (1 \slash 2 \e) \vert x - y\vert^2. 
 $$
 
The  penalty function forces the maximum of $w_\e$ to be asymptotically attained  along the diagonal. 
 Since   $w_\e$ is upper semicontinuous  on the compact set $\bar \B \times \bar \B$, 
there exists $(x_{{\e}},y_{\e}) \in  \bar \B \times \bar \B$ such that
\begin{eqnarray*}
 M_{{\e}}& := &\sup_{(x,y) \in \bar \B^2} \left\{ u (x)- v (y) -\frac{1}{2 \e} \vert x - y\vert^2 \right\}\\
 &=&  u (x_{{\e}})- v  (y_{{\e}}) -\frac{1}{2\e} \vert x_{\e} - y_{\e}\vert^2.
\end{eqnarray*}
 
 The following result  is classical  \cite[Proposition 3.7]{CIL92}:
 
 \begin{lem} \label{asymp}
 We have $ \vert x_{\e} - y_{\e}\vert^2  = o ({\e})$. Every limit
 point $(\hat x, \hat y)$ of $(x_{{\e}}, y_{{\e}})$ satisfies $\hat x= \hat y$, $(\hat x,\hat x) \in  \bar \B \times \bar \B$ and 
 $$
 \lim_{\e \to 0} M_\e = \lim_{\e \to 0}  (u (x_{{\e}})- v  (y_{\e})) =   u (\hat x)- v (\hat x)= M.
 $$
 \end{lem}
 
Our hypothesis of strict local maximum guarantees that $\hat x = 0$.    
 Therefore $(x_{{\e}}, y_{{\e}}) \to (0,0)$ as $\e \to 0$. Take any sequence  $(x_j,y_j) = (x_{\e_j}, y_{\e_j}) \to (0,0) $ 
 and $(x_j,y_j)  \in \B^2$ for any $j > 1$ so that  the conditions in the lemma above are satisfied with $\e = \e_j \to 0$.

Set  $\phi_j (x,y) :=\frac{1}{2\e_j} \vert x - y \vert^2 $. The function $(x,y) \mapsto  u(x) - v(y) - \phi_j (x,y) $ achieves its maximum   in 
$ \bar \B^2$ at the interior point $(x_j,y_j)  \in  \B^2$.
We can thus apply Jensen-Ishii's  maximum principle  and obtain the following estimates:

\begin{lem}\label{main}
For any $\gamma > 0$, we can find $(p^+, Q^+), (p_-, Q_-)\in \C^n \times Sym_{\R}^2 (\C^n)$
such that
\begin{enumerate}
 \item $(p^+, Q^+)\in \overline{\mathcal J}^{2+}  u (x_j)$, $(p_-, Q_-)\in 
\overline{\mathcal J}^{2-} v (y_j)$, where 
$$
p^+ =  \frac{(x_j - y_j)}{2 \e_j} + p_-.
$$
\item The block diagonal matrix with entries $(Q^+, Q_-)$ satisfies:
$$
-(\gamma^{-1}+ \| A \| ) I \le 
\left(
\begin{array}{cc}
Q^+ & 0 \\
0 & -Q_-
\end{array}
\right)
\le A+\gamma A^2, 
$$
where $A=D^2\phi_j (x_{j}, y_{j}) =\gamma^{-1}
\left(
\begin{array}{cc}
I &  -I\\
-I &  I
\end{array}
\right),$
and $\| A \|$ is the spectral radius of $A$. 
\end{enumerate}
\end{lem}

We choose $\gamma = \e_j$. Thus
$$
-(2\e_j^{-1} ) I \le 
\left(
\begin{array}{cc}
Q^+ & 0 \\
0 & -Q_-
\end{array}
\right)
\le \frac{3}{\e_j} \left(
\begin{array}{cc}
I &  -I\\
-I &  I
\end{array}
\right).
$$

Looking at the upper and lower diagonal terms we deduce that the eigenvalues of $Q^+, Q_-$ are $O(\e_j^{-1})$. Evaluating the inequality on vectors of the form $(Z,Z)$ we deduce that $Q^+\leq Q_-$ in the sense of quadratic forms.

For a fixed $Q\in Sym_{\R}^2 (\C^n)$, denote by $H = Q^{1,1}$ its $(1,1)$-part. It is a hermitian matrix. 
Since $(p^+, Q^+)\in \overline{\mathcal J}^{2+} u (x_j)$, 
we deduce from the viscosity differential inequality satisfied by $u$ 
that $H^+$ is positive definite  (see \cite[Theorem 2.5]{EGZ11}).  The inequality $ H^+ \le H_-$
forces $H_->0$, thus $0 \leq H^+ \leq H_-$ and  
$ \mathrm{det} \, H^+\leq \mathrm{det} \, H_-$.

\smallskip

The viscosity differential inequalities satisfied by $ u$ and $v$ yield
$$
e^{ u (x_j) +   n \log (1 - \alpha)} \mu (x_j)  + \alpha^n c^n dV_{eucl}(x_j)
\leq  det H^+  \leq  det H_-  \leq e^{  v (y_j)} \nu (y_j) ,
$$
hence
\begin{equation} \label{eq:FI0}
e^{  u (x_j)  + n \log (1 - \alpha)} \mu (x_j)  + \alpha^n c^n dV_{eucl}(x_j)
  \leq  e^{  v (y_j)} \nu (y_j).
\end{equation}

Recall that
 $
 \lim_{j \to + \infty}  ( u (x_j)- v  (y_j)) =  u (0)- v (0).
$
On the other hand the semi-continuity properties of $u,v$ yield (taking a subsequence if necessary),
$
 \lim_{j \to + \infty}  u (x_j) \leq  u (0), \, \, \lim_{j \to + \infty}  v (y_j) \geq v (0).
$
Hence
$$
 \lim_{j \to + \infty}  u (x_j) = u (0), \, \, \lim_{j \to + \infty}  v (y_j) = v (0).
$$
 Letting $j \to \infty$ in the inequality (\ref{eq:FI0}) we infer  
$$ 
e^{   u(0) + n \log (1-\alpha)} \mu (0) + \alpha^n c^n dV_{eucl}(0) \leq e^{  v (0))} \nu (0).
$$
Since $dV_{eucl}(0)>0$, we infer $\nu(0)>0$. Letting $\alpha \rightarrow 0$ yields the desired inequality.
  \end{proof}

  \subsection{Proof of Theorem \ref{thm:big}}

We are now ready for the proof of Theorem \ref{thm:big}.

\begin{proof}
  Assume that $\f$  is a  subsolution to the  complex 
Monge-Amp\`ere equation (\ref{eq:CMA}) associated to $\mu$  and $\p$ is a   supersolution to the  complex Monge-Amp\`ere equation (\ref{eq:CMA}) associated to $\nu$  in $X$.

Fix $\e \in ]0,1[$ and set 
$$
\tilde \f (x) := (1 - \e) \f(x)  + \e \rho(x),
$$
where  $\rho \leq \f$ is a $\omega$-psh function satisfying $\omega + dd^c \rho \geq \beta$, where $\beta$ is a 
K\"ahler form on $X$. Such a function exists since the cohomology class $\eta$ of $\omega$ is big. 
One can moreover impose $\rho$ to be smooth in the ample locus $\Omega:=\mathrm{Amp} \{ \omega \}$,
with analytic singularities,
and such that $\rho(x) \rightarrow -\infty$ as $x \rightarrow \partial \Omega = X \setminus \Omega$.

Since $ \tilde \f - \p$ is bounded from above on $X$, tends to $- \infty$ when  $x \to \partial \Omega$,  
and is upper semicontinuous in $\Omega$,
the maximum of  $\tilde  \f - \p$ is achieved at some point $x_0 \in \Omega$,
$$
M := \sup_{x \in  X} (\tilde \f (x) - \p  (x)) = \tilde \f (x_0) - \p  (x_0).
$$

Observe that  $\tilde \f$ satisfies  
$
(\omega + dd^c \tilde \f)^n \geq (1 - \e)^n (\omega + dd^c \f)^n + \e^n \beta^n,
$
in the viscosity sense in $ \Omega$.
Now $\tilde \f \leq \f$ since $\rho < \f$ hence
$$
(\omega + dd^c \tilde \f)^n \geq e^{\tilde \f} \left\{ (1 - \e)^n \mu(x) + e^{-C}\e^n \beta^n \right\},
$$
where $\tilde{\f} \leq \f \leq C$.

The idea is to localize near $x_0$ and use Lemma~\ref{Fund}. 
Choose  complex coordinates  $z = (z^1, \ldots, z^n)$ near $x_0$
defining a biholomorphism identifying a closed neighborhood  of $x_0$ to 
the closed complex ball $\bar B_2 := B(0,2) \subset \C^n$ of radius $2$, sending $x_0$ to the origin in $\C^n$.

We let $h_{\omega}  (x)$  be a smooth local potential for $\omega$ in $ B_2$,
i.e. $dd^c h_{\omega} = \omega $ in $ B_2$.
Setting
$u := \tilde \f \circ z^{- 1} + h_{\omega} \circ z^{- 1}  $ in $ B_2$ we obtain
\begin{equation} \label{eq:subsol} 
e^{ u } \tilde \mu \leq 
(dd^c u)^n, \, \, 
\text{ in }  B_2,
\end{equation}
where 
$$
\tilde{\mu}=  e^{- h_{\omega} \circ z^{- 1}} z^{*} \left\{ (1 - \e)^n \mu(x) + e^{-C}\e^n \beta^n \right\} > 0
$$ 
is a  continuous volume form on $ B_2$.

Similarly  the lower semi-continuous function 
$v  := \p \circ z^{- 1}  + h_{\omega} \circ z^{- 1} $
 satisfies the viscosity differential inequality
\begin{equation} \label{eq:supersol}
e^{ v} \tilde \nu \geq (dd^c v)^n, \, \, 
\text{ in }  B_2,
\end{equation} 
where $\tilde \nu := e^{- h_{\omega} \circ z^{- 1}}  z^{*} (\nu) >  0$ 
is a  continuous volume form on $  B_2$.

Our hypothesis guarantees
\begin{equation} \label{eq:LocMax}
M  =  \sup_{ X} \{\tilde \f  - \p\} = \max_{\bar \B} \{ u (\zeta) -  v(\zeta)\} = u (0) - v (0),
\end{equation} 
i.e.   $ u  -  v$ achieves its maximum at the interior point $ 0 \in  \B$. Moreover   
$dd^c u = \omega + dd^c \f \geq \e \beta$, i.e.
$ u$ is $2 c$-strictly psh in $B_2$ for some $c=c(\e)>0$.
We  apply Lemma~\ref{Fund} and conclude that  $\tilde \nu (0)>0$ and
 \begin{equation*}
 e^{    u (0)}  \tilde \mu (0)    \leq e^{ v (0)} \tilde \nu (0),
\end{equation*} 

Going back to $\f$ and $\p$ we obtain  $\nu(x_0)>0$ and
$$
(1-\e)^n e^{\tilde \f(x_0)} \mu(x_0) \leq e^{\p(x_0)} \nu(x_0).
$$

When $\mu = \nu$ we can divide by $\nu(x_0)=\mu(x_0)>0$ and obtain
$$
(1-\e) \f(x)+\e \rho(x) \leq \p(x)-n \log(1-\e),
$$
for all $x \in X$ and $0 < \e <1$.
Letting $\e \to 0$,  we infer
$
\f  \leq \psi, 
$
 in $X \setminus  \{\rho = - \infty\} = \Omega$. The set  $\{\rho = - \infty\}$ has Lebesgue measure $0$ 
 hence  the inequality $\f  \leq \psi$  holds on $X$ if $\psi$ is   continuous on $X$. 
\end{proof}

  \section{Further modifications}

  \subsection{Statements of \cite{EGZ11}}
  
  The definitions and statements have to be modified as follows when working on compact complex manifolds:
  \begin{itemize}
  \item  a viscosity subsolution is bounded from above on $X$, 
  u.s.c. in    $\rm{Amp}\{\omega\}$  where it satisfies the corresponding differential inequalities ;
  \item a viscosity supersolution is bounded from below,
  l.s.c.   in    $\rm{Amp}\{\omega\}$ where it satisfies the corresponding differential inequalities;
  \item a viscosity solution is both a subsolution and a supersolution, in particular it is bounded on $X$
  and continuous in     $\rm{Amp}(\{\omega\})$.
  \end{itemize}

  To construct the unique viscosity solution we proceed as previously done, using the Perron method: 
  the family of subsolution is non empty, it is uniformly bounded from above by a {\it continuous}
  supersolution (e.g. a constant).
  
  Since   the comparison principle is only shown to hold in the ample locus,   the solutions
  we construct are continuous in the ample locus rather than in all of $X$.
  Thus the statements of Theorem A, Corollary B, Theorem C (see also Corollary 3.4, Corollary 3.5,
  Theorem 3.6 and Corollary 3.7)
  have to be modified accordingly, replacing "continuous" by
  "continuous in the ample locus" or "bounded on $X$ and continuous in the ample locus", etc.

    \subsection{Continuous approximation of quasi-psh functions}

  The proof of the main result of \cite{EGZ15} has to be modified similarly.
  It provides an approximation process by $\omega$-psh functions which are merely continuous 
  in the ample locus.
    It is an interesting open problem to decide whether 
 these approximants are actually globally continuous on $X$  (see \cite[Definition 2.2]{EGZ09}).
    It follows from \cite{CGZ13} that this is the case when $\omega=\pi^* \omega_Y$ is the pull-back
    of  a Hodge form on a singular projective variety $Y$, under a desingularization $\pi: X \rightarrow Y$.

  \subsection{Parabolic theory}

A similar problem occurs in the localization technique used in \cite{EGZ16} to prove the  parabolic viscosity comparison principle.
The method proposed in this note can be adapted to the parabolic setting and yields an alternative proof 
of the parabolic comparison principle valid in the ample locus.
We will give the details elsewhere.

\end{document}